\documentclass{amsart}

\usepackage{amsmath}
\usepackage{amsfonts}
\usepackage{amssymb}
\usepackage{amsthm}

\theoremstyle{plain}
\newtheorem{theorem}{Theorem}
\newtheorem{lemma}{Lemma}

\def\R{{\mathbb R}}

\title{Connected Projections of Configuration Spaces of Linkages}
\author{Henry C.~King}

\begin{document}

\maketitle

\begin{abstract}
	We show that any compact connected semialgebraic set is the projection
	of a connected component of the configuration space of a linkage.

\end{abstract}

\section{Connected images of linkage configurations}

Recently Michael Kovalev asked me if any compact connected semialgebraic
set is the projection of a connected component of the configuration space
of a linkage, i.e., if we can choose some points on the linkage
which trace out all of $K$ and the linkage can go continuously from any point
of $K$ to any other.

This short note answers this in the affirmative, 
and also ties up a loose end in \cite{K1} for which I did not provide a reference 
(although a reference of which I am unaware undoubtedly exists).
See \cite{K1} or \cite{K2} for definitions of terms.

\begin{theorem}
Suppose $K\subset \R^m$ is a compact connected semialgebraic set.
Then for any $n\ge2$ there is a classical linkage in $\R^n$
with compact configuration space $C\subset \R^{n\ell}$ and a choice of $m$
coordinates of $\R^{n\ell}$ so that if 
$\pi\colon \R^{n\ell}\to \R^m$ is projection to these coordinates then
for any connected component $C_i$ of $C$, $\pi(C_i)=K$.
\end{theorem}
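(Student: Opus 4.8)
The plan is to pass, via the realization machinery of \cite{K1}, to a pure connectivity statement, and then to obtain connectivity by parametrizing $K$. Recall that the main construction of \cite{K1} produces, for a compact semialgebraic $K\subset\R^m$ and any $n\ge 2$, a classical linkage in $\R^n$ with compact configuration space $C_0\subset\R^{n\ell_0}$ and a coordinate projection $\pi_0$ with $\pi_0(C_0)=K$; what that construction does not control is whether \emph{every} connected component of $C_0$ maps onto all of $K$. Since attaching new bars to a linkage only shrinks its configuration space — and in particular can never merge two components — this cannot be repaired afterwards for a fixed linkage: connectivity has to be designed in from the start.

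The enabling observation is that a compact connected semialgebraic set is not only semialgebraically path connected but is the image of a continuous semialgebraic surjection $f\colon B\to K$, where $B=[0,1]^N$ is a cube of suitable dimension $N$ (one can take $N=\max(\dim K,1)$). To see this, triangulate $K$ (possible since $K$ is semialgebraic), form a walk $\sigma_{i_1},\dots,\sigma_{i_L}$ through its simplices in which consecutive simplices share a vertex and every simplex occurs — such a walk exists because $|K|$, being connected, has connected simplex-adjacency graph — cut $B=[0,1]\times[0,1]^{N-1}$ into $L$ consecutive slabs, and map the $j$-th slab onto $\sigma_{i_j}$ so that its two bounding faces go to the two shared vertices on either side; adjacent slabs agree on the common face, so the pieces glue. (This elementary fact about polyhedra is presumably the loose end alluded to above; a reference surely exists.) Extending $f$ to a continuous semialgebraic $\widehat f\colon\R^N\to K$ with $\widehat f(\R^N)=K$, say by precomposing with the nearest-point retraction onto $B$, lets us ignore the precise shape of what follows.

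Next I would build the linkage in two stages. An \emph{input stage} consisting of $N$ independent one-parameter cells whose product has connected configuration space and whose $N$ distinguished coordinates jointly sweep a box: for $n\ge 3$ a single bar from a fixed point, read in one coordinate, does the job; for $n=2$ one takes instead a slider--crank cell with equal arms, whose configuration space is connected because its two branches meet at the folded configurations. Then a \emph{computing stage}: apply the Kempe/King gadget network of \cite{K1} to realize the semialgebraic map $\widehat f$, identifying its input joints with the outputs of the input stage. The completed linkage $L$ has configuration space $C$, and the coordinate projection $\pi$ onto the $m$ output coordinates satisfies $\pi(x)=\widehat f\bigl(q(x)\bigr)$, where $q$ reads the $N$ input coordinates; hence $\pi(C)=\widehat f(\R^N)=K$. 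The natural map $C\to(\text{input configuration space})$ is a continuous surjection from a compact space onto a connected space, so $C$ is connected — and then $\pi(C_1)=\pi(C)=K$ for its unique component — as soon as every fibre of that map is connected.

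The crux, and the step I expect to fight with, is exactly that last proviso: one must guarantee that each computing gadget, with its inputs pinned, has connected configuration space. This is the familiar nuisance that Kempe-type cells (parallelograms, multipliers, inversors) carry binary ``flip'' ambiguities — the two intersection points of two circles — which generically disconnect the fibre. For $n\ge 3$ the ambiguity disappears on its own, since two spheres meet in a connected circle; the planar case $n=2$ is the delicate one, and there I would replace each cell by a rigidified variant that can pass continuously through its degenerate collinear state, in the spirit of the repaired proofs of Kempe's theorem. If despite this the finished $C$ still fails to be connected, the fallback is to arrange that any residual splitting is by symmetries of the linkage that commute with $\pi$, so that each component still has image $K$ — which is all the theorem requires.
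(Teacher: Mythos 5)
Your proposal has two genuine gaps, and you have correctly identified one of them yourself. First, the ``computing stage'' is not available from the machinery you cite: the Kempe/King gadget constructions realize \emph{algebraic} sets (equivalently, polynomial relations) as projections of configuration spaces; they do not realize an arbitrary continuous semialgebraic map such as your piecewise-linear $\widehat f$ built from a triangulation. A piecewise-defined map is exactly the kind of object these constructions cannot encode directly, so the equation $\pi(x)=\widehat f(q(x))$ has no linkage behind it. Second, even granting such a stage, the fibre-connectivity requirement that you flag as ``the crux'' is left unresolved; the proposed fallback (arrange that any splitting is by symmetries commuting with $\pi$) is a hope, not an argument, and the flip ambiguities of planar cells are precisely why no such statement is easy to arrange. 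As written, the proposal does not constitute a proof.

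The paper's route avoids both difficulties by repairing connectivity at the level of the algebraic set rather than the linkage. By Lemma 3.1 of \cite{K1}, $K$ is the projection of a compact algebraic set $X\subset\R^m\times\R^k$; among all such $X$ choose one with the fewest components. If two components $X_1,X_2$ had overlapping projections (which must happen for some pair, since $K$ is connected), one adjoins a circle in $\R^m\times\R^k\times\R$ lying over a single point of $K$ and meeting both $X_1\times 0$ and $X_2\times 0$, producing a compact algebraic set with the same projection and fewer components --- a contradiction, so $X$ is connected. Then Theorem 1.1 of \cite{K2} gives a linkage whose configuration space is a finite \emph{trivial} analytic cover of $X$; triviality means each component of $C$ maps isomorphically onto the connected $X$ and hence onto $K$. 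Note in particular that the theorem never requires $C$ to be connected --- only that every component surject onto $K$ --- so your premise that ``connectivity has to be designed in from the start'' at the linkage level is what steers you onto the much harder path.
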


\begin{proof}
We know from Lemma 3.1 of \cite{K1} that there is a compact algebraic set 
$X\subset \R^m\times \R^k$ whose projection to $\R^m$ is exactly $K$, but $X$ may have
several connected components, each of which might only project to part of $K$.  
Choose such an $X$ with the least number of connected
components. If $X$ is not connected, there are two connected components $X_1$ and $X_2$
of $X$ whose projections to $\R^m$ intersect, so let $(x,y_i)\in X_i$ be points
with the same projection $x\in K$.
For convenience we may as well assume after an affine transformation that
$y_1=0$ and $y_2=(1,0,\ldots,0)$.
Consider the algebraic set $Y\subset \R^m\times \R^k\times \R$
given by $$Y=\{(u,v,t)\mid u=x, v_1^2+t^2=v_1, v_i=0\ i>1\}.$$
Then $Y$ is a circle which intersects both $X_1\times 0$ and $X_2\times 0$
and projects to $x\in K$.
Hence $Y\cup X\times 0$ is a compact algebraic set whose projection to $\R^m$
is $K$, but it has fewer connected components than $X$, a contradiction.
So $X$ was connected.
Now by Theorem 1.1 of \cite{K2} there is a linkage whose configuration space $C$
projects as a finite trivial analytic cover of $X$.
So in particular any connected component of $C$ projects isomorphically to $X$
and thus projects further onto $K$ as desired.
\end{proof}

I would imagine that the linkage in the proof above would move around in $X_1$,
then seem to mostly stop as it traverses $Y$, changing some internal gears so to speak,
and then continue moving in $X_2$.

\section{Unsemiing a semialgebraic set}

We'll say a subset $Y\subset \R^m$ has property P if there is a real algebraic set 
$X\subset \R^m\times \R^k$ whose projection to $\R^m$ is exactly $Y$.
By the Tarski-Seidenberg theorem we know a subset with property P is semialgebraic.
In Lemma 3.1 of \cite{K1} we asserted the converse and hinted at a proof,
but failed to give a reference which undoubtedly exists.
I'll prove this here, it's easier than finding a reference.
The proof follows from the following two Lemmas.

\begin{lemma}
	Suppose $Y,Z\subset \R^m$ have property P.
	Then $Y\cup Z$, $Y\cap Z$, and $Y-Z$ each have property P.
\end{lemma}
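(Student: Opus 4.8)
The plan is to peel off the three operations in order of difficulty, working straight from the definition of property P. Fix algebraic witnesses: write $Y=\pi(\{p=0\})$ for a polynomial $p$ in the coordinates $x_1,\dots,x_m$ of $\R^m$ together with auxiliary variables $u_1,\dots,u_a$, and $Z=\pi(\{q=0\})$ for a polynomial $q$ in $x_1,\dots,x_m$ together with auxiliary variables $v_1,\dots,v_b$; throughout, $\{p=0\}$ denotes a real zero locus and $\pi$ the projection that forgets the auxiliary coordinates.

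For union and intersection the witnesses combine by elementary polynomial manipulation. Regard $p$ and $q$ as polynomials on the common space $\R^m\times\R^a\times\R^b$ with disjoint auxiliary blocks. Then $Y\cap Z=\pi(\{p^2+q^2=0\})$ and $Y\cup Z=\pi(\{pq=0\})$: a point $x$ lies in the first projection exactly when some choice of auxiliary coordinates makes both $p$ and $q$ vanish, and in the second exactly when some choice makes at least one of them vanish. These two cases need nothing past the definition.

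The difference is the real content, and instead of attacking $Y-Z$ directly I would reduce it to a complement. Since $\R^m$ is itself algebraic and so has property P, and granting the intersection case just proved, it suffices to show $\R^m-Z$ has property P, since then $Y-Z=Y\cap(\R^m-Z)$. To build $\R^m-Z$, invoke the already-available Tarski-Seidenberg theorem to conclude that $Z$, being a projection of an algebraic set, is semialgebraic; hence $Z$ can be put in disjunctive normal form, $Z=\bigcup_i\bigcap_j A_{ij}$, in which each atom $A_{ij}$ is $\{f>0\}$ or $\{f=0\}$ for a polynomial $f$ in $x_1,\dots,x_m$. Then $\R^m-Z=\bigcap_i\bigcup_j(\R^m-A_{ij})$, and each complemented atom is manifestly a projection of an algebraic set: $\R^m-\{f=0\}=\{f\ne0\}=\pi(\{tf-1=0\})$, while $\R^m-\{f>0\}=\{f\le0\}=\{-f>0\}\cup\{f=0\}$ with $\{g>0\}=\pi(\{t^2g-1=0\})$ (here $t$ is a fresh auxiliary variable). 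Feeding these finitely many property-P pieces back through the union and intersection cases already established produces $\R^m-Z$, and hence $Y-Z$.

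The obstacle is conceptual rather than computational: one has to notice that the difference should be routed through complementation, and that complementation costs essentially nothing once Tarski-Seidenberg is available — the individual sets $\R^m-A_{ij}$ are trivial and the rest is the Boolean bookkeeping of disjunctive normal form together with closure under finite unions and intersections. (If one declined to cite Tarski-Seidenberg, one would instead be forced to certify emptiness of the fibres $\{v:q(x,v)=0\}$ by a real Nullstellensatz identity with degree bounds uniform in $x$, which is exactly the sort of thing one does not want to chase a reference for.) I would also check for circularity: the target statement that every semialgebraic set has property P needs only the union and intersection parts of this lemma, while the difference part leans on Tarski-Seidenberg, which is proved on its own.
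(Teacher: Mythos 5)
Your proposal is correct. For the union and intersection it is in substance the same as the paper's proof: the paper takes $X\times 0\cup\{(x,0,z)\mid q(x,z)=0\}$ and $\{(x,y,z)\mid p(x,y)=0,\ q(x,z)=0\}$ in the product space, which is the same device as your $\{pq=0\}$ and $\{p^2+q^2=0\}$. The difference is where you genuinely diverge, and your detour is worth the cost. The paper offers the one-line witness $\{(x,y,z,t)\mid tq(x,z)=1,\ p(x,y)=0\}$, whose projection is $\{x\in Y\mid \exists z,\ q(x,z)\ne 0\}$; but $x\notin Z$ means $\forall z,\ q(x,z)\ne 0$, so this equals $Y-Z$ only when $q$ carries no auxiliary variables (for instance $q(x,z)=z$ and $p\equiv 0$ give $Y=Z=\R^m$, hence $Y-Z=\emptyset$, while the paper's witness projects onto all of $\R^m$). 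Your route --- reduce to $\R^m-Z$, invoke Tarski-Seidenberg to present $Z$ in disjunctive normal form, complement atom by atom via $\{f\ne 0\}=\pi(\{tf=1\})$ and $\{g>0\}=\pi(\{t^2g=1\})$, then reassemble with the union and intersection cases --- handles a general $Z$ with property P correctly, at the price of importing Tarski-Seidenberg into the lemma. The paper's shorter construction does work in the one place the difference is actually needed downstream (subtracting an honest algebraic subset of $\R^m$, such as $\{g=0\}$ from $\{g\ge 0\}$, where $\ell=0$ and the existential and universal quantifiers coincide), but as a proof of the lemma as stated it has a quantifier gap that your complementation argument closes. Your circularity check is also apt: the main ``semialgebraic implies property P'' argument needs only the union and intersection parts together with the atoms of Lemma 2, so leaning on Tarski-Seidenberg for the difference introduces no circle.
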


\begin{proof}
	Let $X\subset \R^m\times \R^k$ and $W\subset \R^m\times\R^\ell$ be real algebraic sets
	which project to $Y$ and $Z$ respectively.
	Let $p:\R^m\times \R^k\to\R$ and $q\colon \R^m\times \R^\ell\to \R$ be polynomials
	so that $X=p^{-1}(0)$ and $W=q^{-1}(0)$.
	Then the algebraic set $$X\times 0\cup \{(x,0,z)\in \R^m\times\R^k\times\R^\ell \mid  q(x,z)=0  \}$$
	projects to $Y\cup Z$,
	the algebraic set $$\{(x,y,z)\in \R^m\times\R^k\times\R^\ell \mid  q(x,z)=0, p(x,y)=0  \}$$
	projects to $Y\cap Z$, and the algebraic set
	$$\{(x,y,z,t)\in \R^m\times\R^k\times\R^\ell\times\R \mid  tq(x,z)=1, p(x,y)=0  \}$$
	projects to $Y-Z$.
\end{proof}

\begin{lemma}
	Let $p\colon\R^m\to \R$ be a polynomial.
	Then $p^{-1}(0)$ and $p^{-1}([0,\infty))$ each have property P.
\end{lemma}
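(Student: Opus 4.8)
The plan is to handle the two sets separately and in each case simply exhibit the required algebraic set; the only real content --- and the one place to be slightly careful --- is the elementary observation that a nonnegative real number is a square.

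First I would dispose of $p^{-1}(0)$: it is already a real algebraic subset of $\R^m$ itself, so it trivially has property P. Taking $k=0$, so that $\R^m\times\R^k=\R^m$ and the projection is the identity, the algebraic set $X=p^{-1}(0)$ projects onto $p^{-1}(0)$.

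For $p^{-1}([0,\infty))$ I would introduce a single auxiliary coordinate $t\in\R$ and set
$$X=\{(x,t)\in\R^m\times\R\mid t^2=p(x)\},$$
which is the zero set of the polynomial $(x,t)\mapsto t^2-p(x)$ and hence a real algebraic set. The projection $X\to\R^m$ hits a point $x$ exactly when the equation $t^2=p(x)$ has a real solution $t$, i.e.\ exactly when $p(x)\ge 0$. Thus the projection of $X$ is precisely $p^{-1}([0,\infty))$, so this set has property P as well. I do not anticipate any genuine obstacle here; the argument is a one-line square-root trick plus the trivial observation about $p^{-1}(0)$.

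Finally I would remark that Lemma 1 and this lemma together yield the promised converse to Tarski--Seidenberg: an arbitrary semialgebraic subset of $\R^m$ is a finite Boolean combination of sets of the forms $p^{-1}(0)$ and $q^{-1}([0,\infty))$, each of which has property P by this lemma, while Lemma 1 shows that property P is preserved under finite unions, intersections, and differences. Hence every semialgebraic subset of $\R^m$ has property P.
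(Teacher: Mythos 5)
Your proof is correct and matches the paper's argument exactly: $p^{-1}(0)$ is the projection of itself, and $p^{-1}([0,\infty))$ is the projection of $\{(x,t)\mid t^2=p(x)\}$ via the same square-root trick. The concluding remark about combining with Lemma 1 to get the converse of Tarski--Seidenberg is also precisely how the paper assembles the two lemmas.
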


\begin{proof}
	Note $p^{-1}(0)$ has property P since it is the projection of itself.
	Also $p^{-1}([0,\infty))$ is the projection of the algebraic set
	$\{(x,t)\in \R^m\times\R \mid t^2 = p(x)\}$.
\end{proof}

\end{document}